\algrenewcommand\algorithmicthen{\relax}
\algrenewcommand\algorithmicdo{\relax}
\newcommand{\I}{^{\textnormal{\tiny I}}}
\newcommand{\II}{^{\textnormal{\tiny II}}}
\newcommand{\m}{^{\textnormal{\footnotesize-}}}
\newcommand{\dd}{\mathrm{d}}
\newcommand{\R}{\mathbb{R}}
\newcommand{\K}{\mathcal{K}}
\theoremstyle{plain}
\newtheorem{theorem}{Theorem}
\newtheorem{lemma}[theorem]{Lemma}
\theoremstyle{definition}
\newtheorem{protocol}[theorem]{Protocol}
\newtheorem{example}[theorem]{Example}
\theoremstyle{remark}
\newtheorem{remark}[theorem]{Remark}
\newlength{\IndentI}
\newlength{\IndentII}
\newlength{\IndentIII}
\newlength{\IndentIV}
\newlength{\IndentV}
\newcommand{\indentI}{\noindent\hspace*{\IndentI}}
\newcommand{\indentII}{\noindent\hspace*{\IndentII}}
\newcommand{\indentIII}{\noindent\hspace*{\IndentIII}}
\newcommand{\indentIV}{\noindent\hspace*{\IndentIV}}
\newcommand{\indentV}{\noindent\hspace*{\IndentV}}
\newcommand{\abstr}{%
  This paper establishes the asymptotic uniqueness of long-term probability forecasts
  in the following form.
  Consider two forecasters who repeatedly issue
  probability forecasts for the infinite future.
  The main result of the paper says that either at least one of the two forecasters will be discredited
  or their forecasts will converge in total variation.
  This can be regarded as a game-theoretic version of the classical Blackwell--Dubins result
  getting rid of some of its limitations.
  This result is further strengthened
  along the lines of Richard Jeffrey's radical probabilism.}
\begin{document}
\title{Asymptotic uniqueness in long-term prediction}
\author{Vladimir Vovk}

\maketitle
\begin{abstract}
  \smallskip
  \abstr

  The version of this paper at \url{http://probabilityandfinance.com} (Working Paper 66)
  is updated most often.
\end{abstract}

\setlength\epigraphwidth{.85\textwidth}
\epigraph{\dots scientific disagreements tend to disappear\dots
  when new data accumulate\dots.}%
  {\citet[p.~673]{Jeffreys:1938}; also in \citet[Sect.~1.9]{Jeffreys:1961}}

\section{Introduction}

This paper belongs to the general area of probabilistic prediction,
and we will be interested in ways of testing long-term predictions
and the asymptotic uniqueness of successful long-term predictions.
The long-term nature of such predictions makes our results
quite different from the usual results in conformal prediction
\citep{Vovk/etal:2022book,Angelopoulos/Bates:2023},
prediction with expert advice
\citep{Cesa-Bianchi/Lugosi:2006,Vovk:1998},
and game-theoretic probability \citep{Shafer/Vovk:2019},
which are usually concerned with one-step-ahead prediction.
In this paper ``prediction'' and ``forecast'' are used interchangeably.

To demonstrate the asymptotic uniqueness of successful long-term predictions,
we consider two forecasters issuing such predictions.
The asymptotic uniqueness holds
if either the forecasts that they issue eventually become almost indistinguishable
or we are able to demonstrate that at least one of the forecasters is inadequate.
This paper establishes new results of this kind and reviews related known results.

\citet[Sect.~2]{Blackwell/Dubins:1962} prove a classical result
about the asymptotic uniqueness of long-term predictions.
They consider two probability measures that agree in a certain sense
(namely, one of them is absolutely continuous w.r.\ to the other).
They then show that the predictions output by the two probability measures
for the infinite future converge in total variation almost surely as time progresses.
We will remove unnecessary restrictions
and generalize this result.

The general phenomenon of convergence of opinions
(i.e., forecasts for the future)
for adequate forecasters will be referred to as \emph{Jeffreys's law},
although originally this expression was used by \citet[Sect.~5.2]{Dawid:1984}
for his result about convergence of one-step-ahead predictions:
\begin{quote}
  I shall call this finding ``Jeffreys's Law'',
  after an admittedly distorted interpretation of Jeffreys (1938):
  ``When a law has been applied to a large body of data
  without any systematic discrepancy being detected\dots\
  the probability of a further inference from the law
  approaches certainty whether the law is true or not.''
\end{quote}
Another quote from the same paper \citep{Jeffreys:1938} is given as the epigraph to this paper.

We start in Sect.~\ref{sec:basic} from discussing ways of testing long-term predictions
and applying those ways to deriving our first version of Jeffreys's law.
This version is stated in terms of a testing protocol
involving three players: Forecaster~I, Forecaster~II,
and a new player, Sceptic, who performs testing.
We will construct a strategy for Sceptic
that discredits at least one of the two forecasters
if their opinions do not converge.
This removes limitations of Blackwell and Dubins's result,
as discussed in Sect.~\ref{sec:comparison}.

One respect in which the result of Sect.~\ref{sec:basic}
(and the Blackwell--Dubins original result)
is restrictive
is that it assumes that we observe data with certainty.
Section~\ref{sec:radical} extends the result of Sect.~\ref{sec:basic}
to Jeffrey's picture of radical probabilism,
in which no evidence is certain.

Proofs of the results of Sects~\ref{sec:basic}--\ref{sec:radical}
are postponed to Sect.~\ref{sec:proofs}.
All our proofs will be constructive
and will exhibit simple strategies for Sceptic
that enforce convergence of opinions.

Section~\ref{sec:comparison} discusses related results in literature
starting from the Blackwell--Dubins result (Sect.~\ref{subsec:BD})
and then going on to results about one-step-ahead prediction (Sect.~\ref{subsec:1-step}).
The most conspicuous difference between the two kinds of results,
those for long-term and one-step-ahead prediction,
is that the latter can be both asymptotic and small-sample,
while the former are invariably asymptotic (to the best of my knowledge);
making them non-asymptotic looks to me an important direction of further research.
Section~\ref{sec:conclusion} concludes
and lists some other directions of further research.

This paper and its predecessor \citep{Vovk:arXiv2308}
were motivated by a conversation with A. Philip Dawid \citep{Vovk/Shafer:2023}.
Among topics of the conversation were one-step-ahead vs multi-step prediction
\citep[Sect.~7]{Vovk/Shafer:2023}
and the Blackwell--Dubins theorem
\citep[Sect.~7 of the arXiv version]{Vovk/Shafer:2023}.

Two people with similar surnames will play key roles in this paper,
Jeffreys and Jeffrey.
Harold Jeffreys (1891--1989) was a contemporary of Ronald Fisher
who spent his professional life in Cambridge, England.
His main speciality was geophysics,
but he was also one of the founders of Bayesian statistics.
Richard Jeffrey (1926--2002) was an influential American philosopher.

\section{Jeffreys's law}
\label{sec:basic}

Before we state Jeffreys's law
we need to discuss ways of testing probability forecasts.
Let us fix a finite \emph{observation space} $\mathbf{Y}$
(equipped with the discrete $\sigma$-algebra).
At each step Reality produces an \emph{observation} $y_n\in\mathbf{Y}$,
$n=1,2,\dots$,
and Forecaster is trying to predict future observations
by issuing a probability forecast $P_n\in\mathfrak{P}(\mathbf{Y}^{\infty})$
($\mathfrak{P}(\mathbf{Y}^{\infty})$ standing
for the family of all probability measures on $\mathbf{Y}^{\infty}$
equipped with the Borel $\sigma$-algebra).
In the following testing protocol,
we let $\mathbf{Y}^*$ stand for the set of all finite sequences of observations,
$\mathbf{Y}^+$ stand for the set of all non-empty finite sequences of observations,
$\lvert x\rvert$ stand for the length of $x\in\mathbf{Y}^*$,
and $[x]$ stand for the set of all infinite continuations
of a finite sequence $x\in\mathbf{Y}^*$
(in other words, $[x]$ is the set of all sequences in $\mathbf{Y}^{\infty}$
that have $x$ as their prefix).

\begin{protocol}\label{prot:basic-1}
  \textbf{Testing protocol:}\\
  \indentI $\K_0 := 1$\\
  \indentI FOR $n=1,2,\dots$:\\
    \indentII Forecaster announces $P_n\in\mathfrak{P}(\mathbf{Y}^{\infty})$\\
    \indentII IF $n>1$:\\
      \indentIII $\K_{n-1} := \K\m_{n-1} + \sum_{x\in\mathbf{Y}^+} f_{n-1}(y_{n-1}x) P_n([x])\\
          \indentV {}- \sum_{x\in\mathbf{Y}^*:\lvert x\rvert>1} f_{n-1}(x) P_{n-1}([x])$
          \hfill\refstepcounter{equation}(\theequation)\label{eq:K-S}\\
    \indentII Sceptic announces $f_n\in\R^{\mathbf{Y}^+}$ such that\\
      \indentIV $f_n(x)=0$ for all but finitely many $x\in\mathbf{Y}^+$\\
    \indentII Reality announces $y_n\in\mathbf{Y}$\\
    \indentII $\K\m_n := \K_{n-1} + f_n(y_n) - \sum_{y\in\mathbf{Y}} f_n(y) P_n([y])$.
    \hfill\refstepcounter{equation}(\theequation)\label{eq:K-R}
\end{protocol}

Protocol~\ref{prot:basic-1} is interpreted in terms of betting,
as described in \citet[Chap.~1]{deFinetti:1937} and,
in our current terminology, \citet[Sect.~3]{Vovk:arXiv2308}.
At each step $n$ Forecaster announces a probability measure $P_n$
for the infinite future $y_n,y_{n+1},\dots$.
The betting interpretation of $P_n$ is that,
for each non-empty finite sequence $x\in\mathbf{Y}^*$,
$P_n([x])$ is the price of a ticket (the \emph{$x$-ticket})
that pays $1_{\{x\subseteq(y_n,y_{n+1},\dots)\}}$
(i.e., it pays 1 if and only if $x$ is a prefix
of the sequence $(y_n,y_{n+1},\dots)$ of the future observations
and pays nothing otherwise).
Forecaster allows his opponent to buy any real number
(positive, negative, or zero; not necessarily integer)
of such tickets.

Testing the forecasts is performed by another player, Sceptic.
At step $n$,
for each non-empty $x\in\mathbf{Y}^*$,
Sceptic announces the number $f_n(x)$ of $x$-tickets
that he chooses to buy at this step.
Therefore, his move is $f_n\in\R^{\mathbf{Y}^+}$,
which means that $f_n:\mathbf{Y}^+\to\R$.
The numbers $f_n(x)$ are allowed to be different from zero
only for finitely many $x$-tickets,
and so the sums $\sum_{x}$ in the protocol are uncontroversial.
At the end of each step Reality announces the actual observation $y_n\in\mathbf{Y}$,
and the $y$-tickets for $y\in\mathbf{Y}$ are cashed in:
the $y_n$-ticket pays $1$, the other $y$-tickets do not pay anything,
and the total cost of all the $y$-tickets is
$\sum_{y\in\mathbf{Y}} f_n(y) P_n([y])$.
The $x$-tickets for longer $x$ are sold at the next step at the new prices
(which accounts for the term
$\sum_{x\in\mathbf{Y}^+} f_{n-1}(y_{n-1}x) P_n([x])$
in Protocol~\ref{prot:basic-1})
and the loss due to their total cost is recorded also at the next step
(which accounts for the term
$\sum_{x\in\mathbf{Y}^*:\lvert x\rvert>1} f_{n-1}(x) P_{n-1}([x])$).

The interpretation of $\K_n$ is that it is Sceptic's capital at time $n$,
but a large $\K_n$ only means that Forecaster's predictions $P_1,P_2,\dots$
have been discredited
(so Sceptic is using ``play money'').
For this interpretation to be valid,
Sceptic is never allowed to go into debt:
as soon as $K_n<0$, the game is stopped
and Sceptic's attempt at discrediting Forecaster fails.

In the case of one-step-ahead prediction we do not need
the capital update \eqref{eq:K-S},
and \eqref{eq:K-R} is sufficient.
On the other hand, for multi-step prediction,
we do not need to have \eqref{eq:K-R}
(which is the standard capital update in game-theoretic probability)
as a separate entry and can merge it into~\eqref{eq:K-S}.
The following protocol is a slightly simplified version of Protocol~\ref{prot:basic-1}.

\begin{protocol}\label{prot:basic-1-simple}
  \textbf{Simplified testing protocol:}\\
  \indentI $\K_0 := 1$\\
  \indentI FOR $n=1,2,\dots$:\\
    \indentII Forecaster announces $P_n\in\mathfrak{P}(\mathbf{Y}^{\infty})$\\
    \indentII IF $n>1$:\\
      \indentIII $\K_{n-1} := \K_{n-2} + \sum_{x\in\mathbf{Y}^*} f_{n-1}(y_{n-1}x) P_n([x])\\
          \indentV {}- \sum_{x\in\mathbf{Y}^+} f_{n-1}(x) P_{n-1}([x])$
          \hfill\refstepcounter{equation}(\theequation)\label{eq:K-merged}\\
    \indentII Sceptic announces $f_n\in\R^{\mathbf{Y}^+}$ such that\\
      \indentIV $f_n(x)=0$ for all but finitely many $x\in\mathbf{Y}^+$\\
    \indentII Reality announces $y_n\in\mathbf{Y}$.
\end{protocol}

\noindent
In Protocol~\ref{prot:basic-1-simple} we merge the steps~\eqref{eq:K-S} and~\eqref{eq:K-R}
of Protocol~\ref{prot:basic-1} into one step~\eqref{eq:K-merged}.

The testing protocol that we use for stating Jeffreys's law
involves one Sceptic playing simultaneously (but separately) against two forecasters
who output probability forecasts $P\I_n,P\II_n\in\mathfrak{P}(\mathbf{Y}^{\infty})$
at steps $n=1,2,\dots$.

\begin{protocol}\label{prot:basic-2}
  \textbf{Double testing protocol:}\\
  \indentI $\K\I_0 = \K\II_0 := 1$\\
  \indentI FOR $n=1,2,\dots$:\\
    \indentII Forecaster I announces $P\I_n\in\mathfrak{P}(\mathbf{Y}^{\infty})$\\
    \indentII Forecaster II announces $P\II_n\in\mathfrak{P}(\mathbf{Y}^{\infty})$\\
    \indentII IF $n>1$:\\
      \indentIII $\K\I_{n-1} := \K\I_{n-2} + \sum_{x\in\mathbf{Y}^*} f\I_{n-1}(y_{n-1}x) P\I_n([x])\\
          \indentV {}- \sum_{x\in\mathbf{Y}^+} f\I_{n-1}(x) P\I_{n-1}([x])$\\
      \indentIII $\K\II_{n-1} := \K\II_{n-2} + \sum_{x\in\mathbf{Y}^*} f\II_{n-1}(y_{n-1}x) P\II_n([x])\\
          \indentV {}- \sum_{x\in\mathbf{Y}^+} f\II_{n-1}(x) P\II_{n-1}([x])$\\
    \indentII Sceptic announces $f\I_n,f\II_n\in\R^{\mathbf{Y}^+}$ such that\\
      \indentIV $f\I_n(x)=f\II_n(x)=0$ for all but finitely many $x\in\mathbf{Y}^+$\\
    \indentII Reality announces $y_n\in\mathbf{Y}$.
\end{protocol}

We will state Jeffrey's law using the total variation distance
\[
  \left\|
    P - Q
  \right\|
  :=
  2
  \sup_{E}
  \left|
    P(E) - Q(E)
  \right|
  \in
  [0,2]
\]
between probability measures $P$ and $Q$ on $\mathbf{Y}^\infty$,
where $E$ ranges over the events in the common domain of $P$ and $Q$.

\begin{theorem}\label{thm:main}
  Sceptic has a strategy in Protocol~\ref{prot:basic-2}
  that guarantees the disjunction of
  \begin{itemize}
  \item
    $\left\|P\I_n-P\II_n\right\|\to0$ as $n\to\infty$,
  \item
    $\K\I_n\to\infty$ as $n\to\infty$,
  \item
    $\K\II_n\to\infty$ as $n\to\infty$.
  \end{itemize}
\end{theorem}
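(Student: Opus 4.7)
The plan is to construct Sceptic's strategy as a convex combination $\sigma = \sum_i \alpha_i \sigma_{\epsilon_i}$ over positive rationals $\epsilon_i$, with each sub-strategy $\sigma_\epsilon$ designed to capture a deterministic profit whenever $\|P\I_n - P\II_n\| \geq \epsilon$ at step $n$.

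At each such ``bad'' step $n$, compactness and metrizability of $\mathbf{Y}^\infty$, together with the definition of total variation and the inner approximation of Borel sets by cylinders, provide a cylinder event $C_n$ (depending on only finitely many future observations $y_n, \ldots, y_{n+k_n-1}$) satisfying, after possibly interchanging the two forecasters, $P\I_n(C_n) - P\II_n(C_n) \geq \epsilon/4$. The cylinder contract on $C_n$ carries the common payoff $\mathbf{1}_{C_n}$ in both games but different forecasted prices, so sub-strategy $\sigma_\epsilon$ sells one unit of this contract in game I (at price $P\I_n(C_n)$) and simultaneously buys one unit in game II (at price $P\II_n(C_n)$), and does nothing at other steps. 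The payoff terms cancel in the sum $\K\I + \K\II$, leaving a deterministic increment of at least $\epsilon/4$ accrued regardless of Reality's moves. Summing over the bad steps, and then mixing over $\epsilon$ with positive weights $\alpha_i$ summing to one, yields $\K\I_n + \K\II_n \to \infty$ on every history for which $\|P\I_n - P\II_n\| \not\to 0$.

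The main obstacle will be upgrading this divergence of the sum to the required individual disjunction ``$\K\I_n \to \infty$ or $\K\II_n \to \infty$'', because a naive constant-size bet lets Reality alternate the gains between the two games so that each capital's $\liminf$ stays finite. To overcome this I would refine the choice of $C_n$ via the Hahn decomposition of the signed measure $P\I_n - P\II_n$ so that $\max(P\I_n(C_n), P\II_n(C_n)) \geq 1/2 + \epsilon/8$, and make the bet sizes $\beta_n$ adaptive in the current values of $\K\I_n$ and $\K\II_n$. Intuitively, this asymmetric choice of $C_n$ squeezes the running count $N_1 := |\{n \leq N : \mathbf{1}_{C_n} = 1\}|$ into mutually incompatible ranges when one tries to keep both capitals bounded simultaneously, and the adaptive bet sizes let Sceptic lock in the accumulated gains in whichever game Reality allows to grow. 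Making this rigorous in the time-varying, fully adaptive setting, while preserving the summability of the mixture weights, is where I expect the bulk of the technical work to lie.
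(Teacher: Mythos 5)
Your first step is fine as far as it goes: hedged unit bets on a cylinder event $C_n$ that approximately separates $P\I_n$ and $P\II_n$ do make the sum $\K\I_n+\K\II_n$ grow by a deterministic amount at every step where $\left\|P\I_n-P\II_n\right\|\ge\epsilon$. But you have correctly located, and not closed, the gap on which the whole theorem turns, and there are really two intertwined problems. First, your short position lets $\K\I_n$ (or $\K\II_n$) become negative, and the standard device for converting ``unbounded capital'' into ``capital tending to infinity'' (Proposition~11.2 of Shafer and Vovk, which the paper invokes for exactly this purpose) requires a nonnegative capital process; restoring nonnegativity forces you to scale the bet by something like $\min(\K\I_{n-1},\K\II_{n-1})$, after which Reality can drive one capital towards zero, make the bet sizes summable, and keep the sum bounded after all. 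Second, even granting divergence of the sum, your own diagnosis is right that this does not by itself yield the disjunction, and the proposed repair (Hahn-decomposition asymmetry plus a counting argument on $N_1$, with adaptive bet sizes) is only a heuristic; you give no reason why the ``mutually incompatible ranges'' claim should survive in the fully adaptive setting, and this is precisely where the proof has to happen.

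The paper resolves both problems at once by working multiplicatively rather than additively. For each $\epsilon$, whenever the Hellinger integral satisfies $H(P\I_n,P\II_n)<1-\epsilon$ (equivalent, via the standard inequalities, to a lower bound on $\left\|P\I_n-P\II_n\right\|$), Sceptic buys from Forecaster~I a forward contract with payoff proportional to $\dd\sqrt{P\I_n|_m P\II_n|_m}/\dd P\I_n|_m$ and from Forecaster~II one with payoff proportional to $\dd\sqrt{P\I_n|_m P\II_n|_m}/\dd P\II_n|_m$, where $m$ is chosen so that the finite-horizon Hellinger integral $H_m$ is still below $1-\epsilon$. Each payoff is nonnegative and costs exactly its forecasted price, so both capitals stay nonnegative; and the product of the two capital multipliers is the deterministic constant $1/H_m(P\I_n,P\II_n)^2>1/(1-\epsilon)^2$, independent of Reality's moves. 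Hence the geometric mean $\sqrt{\K\I_n\K\II_n}$, and therefore $\max(\K\I_n,\K\II_n)$, is forced to be unbounded whenever the triggering condition occurs infinitely often, and Proposition~11.2 (applied in each game separately) finishes the job after mixing over $\epsilon=2^{-j}$. If you want to salvage your additive architecture you would need to supply the adaptive bet-sizing analysis in full; as it stands the argument proves only that $\K\I_n+\K\II_n$ can be made unbounded by a strategy whose individual capitals need not stay nonnegative, which is strictly weaker than the theorem.
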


Theorem~\ref{thm:main} can be interpreted as establishing a connection
between the correspondence (see, e.g., \citealt{Popper:1945})
and convergence (see, e.g., \citealt{Peirce:1877})
theories of truth.
The correspondence theory of truth
(of the probability forecasts in this case)
refers to agreement with reality,
and our interpretation of a large $\K_n$ is lack of agreement with reality.
The convergence theory of truth
regards truth as the point at which different opinions converge.
In Peirce's words,
``the settlement of opinion is the sole end of inquiry''
\citep{Peirce:1877}.
According to Theorem~\ref{thm:main},
a version of the correspondence theory implies a version of the convergence theory.

\section{Agnostic probabilism}
\label{sec:radical}

According to the idea of \emph{radical probabilism},
put forward very clearly by \citet{Jeffrey:1968},
empirical evidence is never certain.
In particular, we never learn the outcomes $y_1,y_2,\dots$ for sure.

Jeffrey referred to the opposite point of view,
in which we do observe the true $y_n$ (in our current context),
as dogmatic probabilism.
We would like our mathematical results to cover dogmatic probabilism as a special case,
and the title of this section, \emph{agnostic probabilism},
refers to its prediction protocol eventually disclosing, or never disclosing, the true outcomes $y_n$,
as the case may be for different $n$.
In particular, the following protocol includes Protocol~\ref{prot:basic-2}
as a special case.

\begin{protocol}\label{prot:additive}
  \textbf{Double agnostic testing protocol:}\\
  \indentI $\K\I_0 = \K\II_0 := 1$\\
  \indentI FOR $n=1,2,\dots$:\\
    \indentII Forecaster I announces $P\I_n\in\mathfrak{P}(\mathbf{Y}^{\infty})$\\
    \indentII Forecaster II announces $P\II_n\in\mathfrak{P}(\mathbf{Y}^{\infty})$\\
    \indentII IF $n>1$:\\
      \indentIII $\K\I_{n-1} := \K\I_{n-2} + \sum_{x\in\mathbf{Y}^+}
          f\I_{n-1}(x) (P\I_n([x]) - P\I_{n-1}([x]))$%
          \hfill\refstepcounter{equation}(\theequation)\label{eq:K-I}\\
      \indentIII $\K\II_{n-1} := \K\II_{n-2} + \sum_{x\in\mathbf{Y}^+}
          f\II_{n-1}(x) (P\II_n([x]) - P\II_{n-1}([x]))$\\
    \indentII Sceptic announces $f\I_n,f\II_n\in\R^{\mathbf{Y}^+}$ such that\\
      \indentIV $f\I_n(x)=f\II_n(x)=0$ for all but finitely many $x\in\mathbf{Y}^+$.
\end{protocol}

\noindent
Protocol~\ref{prot:additive} does not include Reality as a separate player.
Her role is played by the two forecasters:
in order to model Reality of Protocol~\ref{prot:basic-2},
they should choose $P\I_n$ and $P\II_n$ concentrated on
$[(y_1,\dots,y_{n-1})]$ for some observations $y_1,\dots,y_{n-1}\in\mathbf{Y}$.
Such a choice already implies a certain agreement between the forecasters;
in particular, $P\I_n-P\II_n\to0$ as $n\to\infty$ \emph{weakly},
meaning that, for any $x\in\mathbf{Y}^*$,
\[
  P\I_n([x]) - P\II_n([x])
  \to
  0
  \quad
  \text{as $n\to\infty$}.
\]
More precisely,
to embed Protocol~\ref{prot:basic-2} into Protocol~\ref{prot:additive},
the two forecasters in the latter should choose the probability measures
concentrated on $[(y_1,\dots,y_{n-1})]$ and defined by
\begin{align*}
  P\I_n([y_1\dots y_{n-1}x])
  &:=
  P\I_n([x]),\\
  P\II_n([y_1\dots y_{n-1}x])
  &:=
  P\II_n([x]),
  \quad
  x\in\mathbf{Y}^*,
\end{align*}
where the $P\I_n$ and $P\II_n$ on the right-hand sides
are the predictions in the former.

\begin{remark}{\upshape
  Jeffrey traces the idea of radical probabilism back to Ramsey and de Finetti.
  In \citet[p.~66]{Jeffrey:1992book}, he says,
  ``this is his radical probabilism---Ramsey denies
  that our probable knowledge need be based on certainties''.
  And in \citet[p.~2]{Jeffrey:1988}, he says,
  ``De Finetti's probabilism is `radical' in the sense of going all the way down to the roots:
  he sees probabilities as ultimate forms of judgment
  which need not be based on deeper all-or-none knowledge.''
  (Although there is some tension between this interpretation of de Finetti's views
  and de Finetti's emphatic defence of Bayesian conditioning
  in \citealt[Sect.~4.5.3]{deFinetti:2017}.)}
\end{remark}

\begin{remark}{\upshape
  Jeffrey often uses ``radical probabilism'' in the sense of our ``agnostic probabilism'':
  ``Radical probabilism doesn't insist that probabilities be based on certainties''
  \citep[p.~11]{Jeffrey:1992book}.
  However, in other places he appears to deny the existence of certain empirical evidence;
  e.g., in one of his earliest (1968) publications on radical probabilism he says,
  ``Radical probabilism adds [to ``probabilism''] the ``nonfoundational'' thought
  that there is no bedrock of certainty underlying our probabilistic judgments''
  \citep[pp.~44--45]{Jeffrey:1992book}.}
\end{remark}

\begin{example}\label{ex:radical}{\upshape
  Let us check that Theorem~\ref{thm:main} does not hold under agnostic probabilism
  (without any further conditions).
  Suppose $P\I_n = P\I$ and $P\II_n = P\II$ do not depend on $n$.
  Then we have $\K\I_n = \K\II_n = 1$ for all $n$,
  and so we have no convergence of opinion for successful forecasters
  even if $P\I$ and $P\II$ are very different.}
\end{example}

\begin{theorem}\label{thm:radical}
  Sceptic has a strategy in Protocol~\ref{prot:additive}
  that guarantees $\left\|P\I_n-P\II_n\right\|\to0$ as $n\to\infty$
  whenever the following three conditions are satisfied:
  \begin{itemize}
  \item
    $P\I_n-P\II_n\to0$ weakly as $n\to\infty$,
  \item
    $\K\I_n\not\to\infty$ as $n\to\infty$, and
  \item
    $\K\II_n\not\to\infty$ as $n\to\infty$.
  \end{itemize}
\end{theorem}

Theorem~\ref{thm:radical} includes Theorem~\ref{thm:main}
as a special case.
According to Example~\ref{ex:radical},
we need to impose some condition of agreement
(which we want to make as weak as possible)
between the two forecasters
before we can claim that they agree in the strong sense of convergence in total variation.
In Theorem~\ref{thm:main} both forecasters weakly agree with Reality
(and therefore, between themselves):
for a fixed $x$,
$P\I_n([x])$ and $P\II_n([x])$ of Protocol~\ref{prot:additive}
become equal (namely, both equal to 0 or to 1)
as soon as $n$ exceeds the length of $x$.
In Theorem~\ref{thm:radical} we require an even weaker agreement between forecasters
(and do not require any agreement with Reality,
who is not even a player).

In Sect.~\ref{sec:basic}
we interpreted Jeffreys's law in the form of Theorem~\ref{thm:main}
as establishing a connection between the correspondence and convergence theories of truth.
There is no absolute notion of truth under radical probabilism,
and so this interpretation is not applicable to Theorem~\ref{thm:radical}.
Theorem~\ref{thm:radical} merely provides a means of boosting agreement
between successful forecasters:
agreement in the sense of weak convergence
implies agreement in the sense of convergence in total variation.

\section{Proofs}
\label{sec:proofs}

In this section we will prove Theorems~\ref{thm:main} (in Sect.~\ref{subsec:proof-main})
and~\ref{thm:radical} (in Sect.~\ref{subsec:proof-radical}).

\subsection{Proof of Theorem~\ref{thm:main}}
\label{subsec:proof-main}

For simplicity,
in the proofs in this section we assume that $P\I_n([x])>0$ and $P\II_n([x])>0$
for all $x\in\mathbf{Y}^*$
(``Cromwell's rule'').

We will construct a strategy for Sceptic
that guarantees the disjunction of
\begin{itemize}
\item
  $\left\|P\I_n-P\II_n\right\|\to0$ as $n\to\infty$,
\item
  $\sqrt{\K\I_n\K\II_n}$ is unbounded as $n\to\infty$.
\end{itemize}
This is sufficient since we can apply Proposition~11.2 in \citet{Shafer/Vovk:2019}
to turn an unbounded capital $\K\I_n$ or $\K\II_n$
into $\K\I_n\to\infty$ or $\K\II_n\to\infty$.

As a second step,
let us replace $\left\|P\I_n-P\II_n\right\|\to0$ by $H(P\I_n,P\II_n)\to1$,
where $H$ is the \emph{Hellinger integral}
\citep[Definition~3.9.3]{Shiryaev:2016}:
\begin{equation*}
  H(P,Q)
  :=
  \int_{\mathbf{Y}^{\infty}}
  \sqrt{\dd P \dd Q}
  \in
  [0,1].
\end{equation*}
This can be done because of the standard connection
between total variation distance and Hellinger integral
\citep[Theorem~3.9.1]{Shiryaev:2016}:
\begin{equation}\label{eq:H}
  2
  \left(
    1 - H(P,Q)
  \right)
  \le
  \left\|
    P-Q
  \right\|
  \le
  \sqrt
  {
    8
    \left(
      1 - H(P,Q)
    \right)
  }.
\end{equation}

We will also need an approximation to $H(P,Q)$,
given in the following lemma,
in terms of
\[
  H_m(P,Q)
  :=
  \int_{\mathbf{Y}^m}
  \sqrt{(\dd P|_m)(\dd Q|_m)},
\]
where $P|_m$ and $Q|_m$ stand for the restrictions of $P$ and $Q$
to the first $m$ observations;
in other words, $P|_m$ and $Q|_m$ are the probability measures on $\mathbf{Y}^m$
satisfying $(P|_m)(x)=P(x)$ and $(Q|_m)(x)=Q(x)$ for all $x\in\mathbf{Y}^m$.

\begin{lemma}
  As $m\to\infty$,
  $H_m(P,Q)\to H(P,Q)$.
\end{lemma}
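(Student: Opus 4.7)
The plan is to reduce the claim to L\'evy's upward martingale convergence theorem applied on a fixed dominating measure. Take $\mu := (P+Q)/2$ on $\mathbf{Y}^\infty$, so that $p := \d P/\d\mu$ and $q := \d Q/\d\mu$ exist and $H(P,Q) = \int\sqrt{pq}\,\d\mu$. Let $\FFF_m$ denote the $\sigma$-algebra generated by the first $m$ coordinates. A direct computation shows that the restriction $P|_m$ has Radon--Nikodym derivative $p_m := \E_\mu[p\mid\FFF_m]$ against $\mu|_m$, given explicitly by $p_m = P([x])/\mu([x])$ on each cylinder $[x]$ with $x\in\mathbf{Y}^m$; similarly for $q_m$. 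In this notation $H_m(P,Q) = \int \sqrt{p_m q_m}\,\d\mu$.

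Next I would apply L\'evy's upward theorem to the nonnegative $L^1(\mu)$ martingales $(p_m)$ and $(q_m)$: since $\mathbf{Y}$ is finite, $\bigcup_m \FFF_m$ generates the Borel $\sigma$-algebra of $\mathbf{Y}^\infty$, so $p_m \to p$ and $q_m \to q$ both $\mu$-almost surely. Consequently $\sqrt{p_m q_m} \to \sqrt{pq}$ $\mu$-a.s. The identity $p_m + q_m = \E_\mu[p+q\mid\FFF_m] = 2$ together with AM--GM gives the uniform bound $\sqrt{p_m q_m} \le 1$, and dominated convergence yields $H_m(P,Q) \to H(P,Q)$.

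I do not anticipate any real obstacle. As a sanity check, the conditional Cauchy--Schwarz inequality $\E_\mu[\sqrt{pq}\mid\FFF_m] \le \sqrt{p_m q_m}$ gives, after integration, $H_m(P,Q) \ge H(P,Q)$ for every $m$, and in fact shows that $m \mapsto H_m(P,Q)$ is nonincreasing; the only substantive content of the lemma is therefore to rule out a strict drop in the limit, which is exactly what dominated convergence supplies. The small points that need care are the identification of $p_m$ with $\E_\mu[p\mid\FFF_m]$ and the $\FFF_\infty$-measurability of $p$, both standard.
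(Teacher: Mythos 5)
Your proof is correct and follows essentially the same route as the paper: the same dominating measure $R=(P+Q)/2$, martingale convergence of the restricted densities (the paper cites Doob's theorem where you cite L\'evy's upward theorem, which in this closed-martingale setting is the same tool), and passage to the limit in the integral via the uniform bound on the densities. Your explicit AM--GM bound $\sqrt{p_mq_m}\le1$ and the monotonicity remark are pleasant refinements of the paper's observation that the ratios lie in $[0,2]$, but they do not change the argument.
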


\begin{proof}
  Applying Doob's martingale convergence theorem
  \citep[Theorem~7.4.1]{Shiryaev:2019}
  to $(\dd P|_m) / (\dd R|_m)$ and $(\dd Q|_m) / (\dd R|_m)$,
  where $R:=(P+Q)/2$,
  we obtain their $R$-almost sure convergence as $m\to\infty$
  to $\dd P / \dd R$ and $\dd Q / \dd R$,
  respectively
  (this step uses Cara\-th\'eo\-dory's theorem,
  as in the proof of \citealt[Theorem~7.6.1]{Shiryaev:2019}).
  Therefore,
  \[
    \sqrt{\frac{\dd P|_m}{\dd R|_m} \frac{\dd Q|_m}{\dd R|_m}}
    \to
    \sqrt{\frac{\dd P}{\dd R} \frac{\dd Q}{\dd R}}
  \]
  $R$-almost surely
  and, since all these fractions take values in $[0,2]$,
  in $L_1$ w.r.\ to $R$.
\end{proof}

\begin{algorithm}[bt]
  \caption{Betting strategy for Sceptic for a fixed $\epsilon\in(0,1)$}
  \label{alg:BD}
  \begin{algorithmic}[1]
    \For{$n=1,2\dots$:}
      \State Observe $P\I_n$ and $P\II_n$ on step $n$ of Protocol~\ref{prot:basic-2}
      \If{$H(P\I_n,P\II_n)<1-\epsilon$:}\label{l:If}
        \State Find $m$ such that $H_m(P\I_n,P\II_n)<1-\epsilon$
        \State Buy a collection of $x$-tickets, $x\in\mathbf{Y}^m$, from Forecaster~I that\label{l:Buy-1}
          \Statex\hspace{\algorithmicindent*\real{3}}
            multiplies the current $\K\I_n$ by
            $\displaystyle\frac{1}{H_m(P\I_n,P\II_n)}\frac{\dd\sqrt{P\I_n|_m P\II_n|_m}}{\dd P\I_n|_m}$
        \State Buy a collection of $x$-tickets, $x\in\mathbf{Y}^m$, from Forecaster~II that\label{l:Buy-2}
          \Statex\hspace{\algorithmicindent*\real{3}}
            multiplies the current $\K\II_n$ by
            $\displaystyle\frac{1}{H_m(P\I_n,P\II_n)}\frac{\dd\sqrt{P\I_n|_m P\II_n|_m}}{\dd P\II_n|_m}$
        \State Skip the next $m$ steps\label{l:Skip}
      \EndIf
    \EndFor
  \end{algorithmic}
\end{algorithm}

Fix temporarily an $\epsilon>0$ (we will later mix over a sequence of $\epsilon\to0$).
Given $\epsilon$, Sceptic can bet at the steps $n=1,2,\dots$ as in Algorithm~\ref{alg:BD}.
The instructions in Algorithm~\ref{alg:BD} should be read in parallel
with the description below.

In our interpretation of Protocol~\ref{prot:basic-2}
we assumed that at each step Sceptic sells all tickets
bought at the previous step and buys new tickets.
For a given $x$-ticket,
an important special case is where the new amount of $x$-tickets
is equal to the old amount
(and so we can assume that no trade in $x$-tickets takes place at this step).
In particular, Sceptic can buy an $x$-ticket at any step $n$
and hold it to maturity collecting $1_{\{(y_n,\dots,y_{n+m-1})=x\}}$ at step $n+m$,
where $m:=\lvert x\rvert$.

Line~\ref{l:Buy-1} of Algorithm~\ref{alg:BD} instructs Sceptic to buy a collection of $x$-tickets
multiplying his current capital~by
\begin{multline}\label{eq:goal}
  \frac{1}{H_m(P\I_n,P\II_n)}
  \frac{\dd\sqrt{P\I_n|_m P\II_n|_m}}{\dd P\I_n|_m}\\
  =
  \frac{1}{H_m(P\I_n,P\II_n)}
  \frac
  {\sqrt{P\I_n([(y_n,\dots,y_{n+m-1})]) P\II_n([(y_n,\dots,y_{n+m-1})])}}
  {P\I_n([(y_n,\dots,y_{n+m-1})])}.
\end{multline}
Let us check that it is indeed possible
to turn an initial capital of 1 into \eqref{eq:goal}.
By definition, Sceptic can buy from Forecaster~I a ticket paying $1_{\{x=(y_n,\dots,y_{n+m-1})\}}$
(i.e., the $x$-ticket)
for $P\I_n(x)$,
for any $x\in\mathbf{Y}^m$.
Therefore, Sceptic can buy a ticket paying
\[
  \frac
  {\sqrt{P\I_n([x]) P\II_n([x])}}
  {P\I_n([x])}
  1_{\{x=(y_n,\dots,y_{n+m-1})\}}
\]
for $\sqrt{P\I_n([x]) P\II_n([x])}$, for any $x\in\mathbf{Y}^m$.
Buying such a ticket for each $x\in\mathbf{Y}^m$ results in a payoff of
\begin{equation*}
  \frac
  {\sqrt{P\I_n([(y_n,\dots,y_{n+m-1})]) P\II_n([(y_n,\dots,y_{n+m-1})])}}
  {P\I_n([(y_n,\dots,y_{n+m-1})])}
\end{equation*}
for the price of
\[
  \sum_{x\in\mathbf{Y}^m}
  \sqrt{P\I_n([x]) P\II_n([x])}
  =
  H_m(P\I_n,P\II_n).
\]
Therefore, we can indeed turn 1 into \eqref{eq:goal}.

A similar argument applies to Forecaster~II
(line~\ref{l:Buy-2} of Algorithm~\ref{alg:BD}).

When the condition in line~\ref{l:If} is satisfied,
the geometric mean $\sqrt{\K\I_n\K\II_n}$ of Sceptic's capitals
over the $m$ steps in line~\ref{l:Skip}
will be multiplied by $1/H_m(P\I_n,P\II_n)$,
i.e., by more than $\frac{1}{1-\epsilon}$.
Therefore,
$\sqrt{\K\I_n\K\II_n}$ will be unbounded
if the condition in line~\ref{l:If} is satisfied infinitely often.

Let $\K\I_n(\epsilon)$ and $\K\II_n(\epsilon)$ be the capital processes
that result from using Algorithm~\ref{alg:BD} with parameter $\epsilon$
in Protocol~\ref{prot:basic-2}.
Then
\begin{equation*}
  \K\I_n := \sum_{j=1}^{\infty} 2^{-j} \K\I_n(2^{-j})
  \quad\text{and}\quad
  \K\II_n := \sum_{j=1}^{\infty} 2^{-j} \K\II_n(2^{-j})
\end{equation*}
are also capital processes for a valid strategy for Sceptic,
and $\sqrt{\K\I_n\K\II_n}$ will be unbounded
unless $H(P\I_n,P\II_n)\to1$.
This completes the proof.

\begin{remark}{\upshape
  The proof given in this section relies
  on the method of combining probability measures $P\I$ and $P\II$
  known as ``geometric pooling'';
  see, e.g., \citet{Pettigrew/Weisberg:2024} for a recent review.}
\end{remark}

\subsection{Proof of Theorem~\ref{thm:radical}}
\label{subsec:proof-radical}

We follow the proof of Theorem~\ref{thm:main} in Sect.~\ref{subsec:proof-main}
modifying it slightly.
We still use Algorithm~\ref{alg:BD},
but now $P\I_n$ and $P\II_n$ have a different meaning:
both of them are predictions
for the whole sequence of observations $y_1,y_2,\dots$
rather than only for the future observations $y_n,y_{n+1},\dots$
(there is no clear-cut division between past and future under radical probabilism).

It will be convenient to say that
in~\eqref{eq:K-I} in Protocol~\ref{prot:additive}
Sceptic invests in the $x$-tickets at step $n-1$,
with his initial investment being $P\I_{n-1}([x])$
and his payoff being $P\I_n([x])$ for each $x$-ticket.
Since Sceptic can hold the same position $f\I_n(x)$ over a number of steps,
he can invest in the $x$-tickets at step $n$
receiving a payoff at step $N>n$,
with the initial investment being $P\I_{n}([x])$
and the final payoff being $P\I_N([x])$.

Now we modify the argument leading to the possibility
to increase Sceptic's capital \eqref{eq:goal}-fold
(where \eqref{eq:goal} is the equation number).
Sceptic can invest $P\I_n(x)$ in the $x$-ticket at step $n$,
and his payoff at step $N>n$ will be $P\I_N(x)$,
for any $x\in\mathbf{Y}^m$.
Therefore, Sceptic can invest $\sqrt{P\I_n([x]) P\II_n([x])}$
for a payoff of
\[
  \frac
    {\sqrt{P\I_n([x]) P\II_n([x])}}
    {P\I_n([x])}
  P\I_N(x).
\]
Investing in each $x\in\mathbf{Y}^m$ results in a payoff of
\begin{equation*}
  \sum_{x\in\mathbf{Y}^m}
  \frac
    {\sqrt{P\I_n([x]) P\II_n([x])}}
    {P\I_n([x])}
  P\I_N(x)
\end{equation*}
for the initial investment of
\[
  \sum_{x\in\mathbf{Y}^m}
  \sqrt{P\I_n([x]) P\II_n([x])}
  =
  H_m(P\I_n,P\II_n).
\]
Therefore,
starting from step~\ref{l:Buy-1} of Algorithm~\ref{alg:BD}
the capital $\K\I_n$ can be multiplied by
\begin{equation}\label{eq:increase-1}
  \frac{1}{H_m(P\I_n,P\II_n)}
  \sum_{x\in\mathbf{Y}^m}
  \frac{\sqrt{P\I_n([x])P\II_n([x])}}{P\I_n([x])}
  P\I_N([x])
\end{equation}
by step $N$
(later we will make $N$ large, definitely $N>m$),
and starting from step~\ref{l:Buy-2}
the capital $\K\II_n$ will be multiplied by
\begin{equation}\label{eq:increase-2}
  \frac{1}{H_m(P\I_n,P\II_n)}
  \sum_{x\in\mathbf{Y}^m}
  \frac{\sqrt{P\I_n([x])P\II_n([x])}}{P\II_n([x])}
  P\II_N([x])
\end{equation}
by step $N$.

Using the weak convergence $P\I_N-P\II_N\to0$
and the uniform continuity of \eqref{eq:increase-1} as function of $P\I_N|_m$
and of \eqref{eq:increase-2} as function of $P\II_N|_m$,
we can replace, asymptotically, $P\I_N|_m$ and $P\II_N|_m$ by the same probability measure $P$ on $\mathbf{Y}^m$,
and so the geometric mean of \eqref{eq:increase-1} and \eqref{eq:increase-2}
can be bounded below, for a sufficiently large $N$, by the last term of the chain
\begin{align*}
  &\frac{1}{H_m(P\I_n,P\II_n)}
  \sqrt
  {
    \left(
      \sum_{x\in\mathbf{Y}^m}
      \frac{\sqrt{P\I_n([x])P\II_n([x])}}{P\I_n([x])}
      P([x])
    \right)
    \left(
      \sum_{x\in\mathbf{Y}^m}
      \frac{\sqrt{P\I_n([x])P\II_n([x])}}{P\II_n([x])}
      P([x])
    \right)
  }\\
  &\ge
  \frac{1}{H_m(P\I_n,P\II_n)}
  \sum_{x\in\mathbf{Y}^m}
  \sqrt
  {
    \frac{\sqrt{P\I_n([x])P\II_n([x])}}{P\I_n([x])}
    \frac{\sqrt{P\I_n([x])P\II_n([x])}}{P\II_n([x])}
  }
  P([x])\\
  &=
  \frac{1}{H_m(P\I_n,P\II_n)}
  >
  \frac{1}{1-\epsilon},
\end{align*}
where the ``$\ge$'' follows from the concavity of the geometric mean function
$(u,v)\in[0,\infty)^2\mapsto\sqrt{u v}$
and Jensen's inequality \citep[Lemma 2.8.1]{Ferguson:1967}.
Therefore, we can wait until the geometric mean of the capitals increases
$\frac{1}{1-\epsilon}$-fold.
The proof is completed as before,
by mixing over $\epsilon$.
Notice that in our construction Sceptic's capital cannot become negative.

\section{Comparison with known results}
\label{sec:comparison}

In this section we will discuss two kinds of Jeffreys's law:
for predicting the infinite future (Sect.~\ref{subsec:BD})
and for one-step-ahead prediction (Sect.~\ref{subsec:1-step}).

\subsection{Blackwell--Dubins result}
\label{subsec:BD}

The main topic of this subsection is limitations of Blackwell and Dubins's classical result
(\citeyear{Blackwell/Dubins:1962}, Sect.~2)
and how they are overcome by this paper's results.

From the purely mathematical point of view,
one limitation of Blackwell and Dubins's result
is that they consider two probability measures, $Q$ and $P$
(which correspond to our Forecasters~I and~II)
such that $Q$ is absolutely continuous w.r.\ to $P$,
denoted $Q\ll P$.
This requirement means that $Q(E)=0$
for any event $E$ such that $P(E)=0$.
Let us write $P\I$ and $P\II$ for $Q$ and $P$, respectively.

We regard $P\I$ as a base forecasting strategy;
the corresponding $n$th forecast $P\I_n$
is the conditional probability of the future observations $y_n,y_{n+1},\dots$
given the past observations $y_1,\dots,y_{n-1}$,
with the observations $y_1,y_2,\dots$ generated from $P\I$.
Then $P\II$ is an alternative forecasting strategy
producing, in a similar manner, $P\II_1,P\II_2,\dots$.
The condition $P\I\ll P\II$ can be interpreted as $P\II$ being at least as adaptive as $P\I$.
Blackwell and Dubins's result says that $P\I_n$ and $P\II_n$ converge in total variation
with $P\I$-probability 1 whenever $P\I$ agrees with $P\II$,
in the sense of $P\II$ being at least as adaptive as $P\I$.
(This ignores technical issues surrounding the existence of conditional distributions,
which are less acute in our current context of a finite observation space $\mathbf{Y}$.)

What if the two forecasting strategies do not agree
(we have neither $P\I\ll P\II$ nor $P\II\ll P\I$)?
For example, suppose that they agree perfectly everywhere apart from two events,
$E\I$ and $E\II$,
for which
\[
  P\I(E\I)
  =
  P\II(E\II)
  =
  10^{-6},
  \quad
  P\II(E\I)
  =
  P\I(E\II)
  =
  0.
\]
For this case the Blackwell--Dubins result does not say anything.
To prevent the possibility of waiting until one or both events become settled,
suppose further that, for all $n$,
\begin{multline*}
  P\I(E\I\mid y_1,\dots,y_n),
  P\II(E\I\mid y_1,\dots,y_n),\\
  P\I(E\II\mid y_1,\dots,y_n),
  P\II(E\II\mid y_1,\dots,y_n)
  \in
  (0,1).
\end{multline*}

In our context, the condition of absolute continuity becomes not only restrictive but also less natural.
Namely, in our framework, there are no \emph{a priori} connections
between the probability forecasts $P\I_1,P\I_2,\dots$ and $P\II_1,P\II_2,\dots$
output at different steps,
and so it is possible to have $P\I_1\ll P\II_1$
followed by $P\I_2\mathrel{\bot}P\II_2$ ($\bot$ meaning mutual singularity)
followed by $P\I_3\ll P\II_3$, etc.

The condition of absolute continuity is very natural (or even unavoidable)
under the Bayesian interpretation of Blackwell and Dubins's result
(both Blackwell and Dubins were Bayesians \citep[pp.~43--44 and p.~48]{DeGroot:1986}).
Its Bayesian interpretation is that Forecaster~I
believes that the forecasts issued by the two forecasters
will converge in total variation.
Bayesian theory is based on personal probability,
and for a Bayesian interpretation it is essential
to indicate the person whose beliefs we are talking about.
It is difficult to see who can be such a person
without the requirement of absolute continuity.

Our interpretation of Theorem~\ref{thm:radical}
was in terms of boosting:
weak convergence can be boosted to convergence in total probability.
Blackwell and Dubins's result can also be interpreted in these terms:
agreement between two forecasting strategies in the sense of $P\I\ll P\II$
can be boosted to convergence in total variation $P\I$-almost surely.

The game-theoretic version of Blackwell and Dubins's result
proved in Sect.~\ref{sec:basic} of this paper
not only removes some of the unnecessary restrictions
but is also more constructive.
We have an explicit strategy for Sceptic that discredits at least one of the forecasters
by successful betting against his forecasts
unless the predictions that the forecasters output converge in total variation.
While Blackwell and Dubins's result under its Bayesian interpretation
only concerns a Bayesian's beliefs,
our result establishes connections with idealized reality.

\subsection{One-step-ahead prediction}
\label{subsec:1-step}

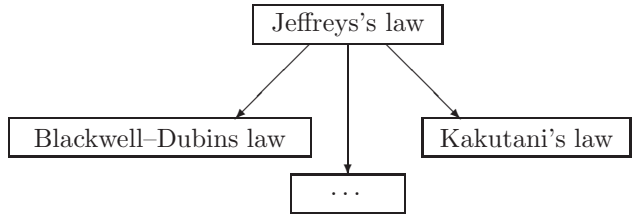
\begin{figure}
  \setlength{\unitlength}{0.5cm}
  \begin{center}
    \begin{picture}(17,5.5)(-9,-3.5)
      \put(-1,1){\vector(-1,-1){2}}
      \put(1,1){\vector(1,-1){2}}
      \put(0,1){\vector(0,-1){3.5}}
      \put(-2.5,1){\framebox(5,1)[cc]{Jeffreys's law}}
      \put(-9,-2){\framebox(8,1)[cc]{Blackwell--Dubins law}}
      \put(2,-2){\framebox(5.5,1)[cc]{Kakutani's law}}
      \put(-1.5,-3.5){\framebox(3,1)[cc]{\dots}}
    \end{picture}
  \end{center}
  \caption{Two known special cases of Jeffreys's law}
  \label{fig:Jeffreys}
\end{figure}

Figure~\ref{fig:Jeffreys} represents
two known special cases of Jeffreys's law.
Blackwell and Dubins's result is an instance of one of them;
more generally, Fig.~\ref{fig:Jeffreys} refers to the \emph{Blackwell--Dubins law}
as an asymptotic qualitative statement about convergence
between forecasts for the infinite future
(or, in the case of radical probabilism, infinite past, present, and future).
Theorem~\ref{thm:main} and its generalization Theorem~\ref{thm:radical}
are also results of this kind.
Despite overcoming some limitations of Blackwell and Dubins's result,
they are still asymptotic and qualitative.

A very different approach to Jeffreys's law was pioneered by \citet{Kakutani:1948},
with a crucial step made by \citet{Kabanov/etal:1977}.
Their results also show that, provided $P\I\ll P\II$,
the one-step-ahead predictions computed from $P\I$ and $P\II$ converge,
but the nature of these results very different.
Their important advantage is that they are quantitative
(albeit with interesting qualitative implications);
the price to pay, however, is that they only cover one-step ahead prediction.

The results by \citet{Kakutani:1948} and~\citet{Kabanov/etal:1977}
are measure-theoretic and have similar disadvantages
to those of the Blackwell--Dubins result
discussed in Sect.~\ref{subsec:BD},
but these disadvantages were eliminated in the game-theoretic versions
described in \citet[Sect.~10.7]{Shafer/Vovk:2019}
(and developed in the references given in \citealt[Sect.~10.9]{Shafer/Vovk:2019}).
One such result is that in Protocol~\ref{prot:basic-2} Sceptic can ensure
\begin{equation}\label{eq:quantitative}
  \ln\K\I_n
  +
  \ln\K\II_n
  \ge
  \frac14
  \sum_{i=1}^n
  \left\|
    (P\I_i|_1) - (P\II_i|_1)
  \right\|^2
\end{equation}
for all $n$,
where $P\I_i|_1$ and $P\II_i|_1$ are the restrictions of $P\I_i$ and $P\II_i$,
respectively, to $\mathbf{Y}$, as defined earlier
(i.e., they are the one-step-ahead restrictions).
(To obtain \eqref{eq:quantitative}, combine Proposition 10.17 in \citet{Shafer/Vovk:2019}
with the standard bound for Hellinger distance,
which is essentially the right-hand side of \eqref{eq:H}
\citep[Theorem 3.9.1, (23)]{Shiryaev:2016}.)

It is clear that \eqref{eq:quantitative} implies Theorem~\ref{thm:main}
with  $P\I_n$ and $P\II_n$ replaced by $P\I_n|_1$ and $P\II_n|_1$,
respectively.
However, \eqref{eq:quantitative} is an explicit lower bound rather than merely an asymptotic result.
Figure~\ref{fig:Jeffreys} refers to the class of such quantitative one-step-ahead results
as \emph{Kakutani's law}.
The ellipsis represents new special cases of Jeffreys's law,
those yet to be discovered.

\section{Conclusion}
\label{sec:conclusion}

A big disadvantage of Theorems~\ref{thm:main} and~\ref{thm:radical},
inherited from their measure-theoretic prototype \citep{Blackwell/Dubins:1962},
is that they are merely asymptotic;
it is not obvious how to make them quantitative.
The versions given in \citet[Sect.~10.7]{Shafer/Vovk:2019}
are much more precise, but they cover only one-step-ahead forecasting.
Bridging the gap between these two very different kinds of results
(depicted in Fig.~\ref{fig:Jeffreys})
looks to me an interesting direction of further research.

Other possible directions of further research are:
\begin{itemize}
\item
  Generalizing our testing protocols and results
  to the case of an infinite observation space $\mathbf{Y}$.
\item
  Are there situations where a wider set of permitted moves for Sceptic would be useful?
  In Protocols~\ref{prot:basic-1}--\ref{prot:basic-2} and~\ref{prot:additive}
  we considered Sceptic's moves $f$ such that $f(x)=0$ apart from finitely many $x$.
  More generally, we could permit $f$ such that $f(x)\ne0$ for arbitrarily long $x$,
  but this would require careful analysis of convergence of the resulting infinite series
  in expressions for Sceptic's capital.
\end{itemize}

\subsection*{Acknowledgments}

Many thanks to A. Philip Dawid for his questions and comments.
Research on this paper has been partially supported by Mitie.

\end{document}